\documentclass[11pt,a4paper]{article}
\usepackage[T1]{fontenc}
\usepackage[utf8]{inputenc}
\usepackage[american]{babel}
\usepackage{lmodern}
\usepackage[left=2.2cm,right=2.2cm,top=2.1cm,bottom=2.1cm]{geometry}
\usepackage{mathtools,amssymb,amsthm,bm}
\usepackage{microtype}
\usepackage{tikz}
\usetikzlibrary{decorations.pathreplacing}
\usepackage[font=small,labelfont=bf]{caption}
\usepackage{flafter}
\usepackage[colorlinks=true,anchorcolor=blue,filecolor=blue,
  linkcolor=red,urlcolor=blue,citecolor=blue]{hyperref}
  
\AtBeginDocument{%
  \setlength{\abovedisplayskip}{4pt plus 2pt minus 3pt}%
  \setlength{\belowdisplayskip}{4pt plus 2pt minus 3pt}%
  \setlength{\abovedisplayshortskip}{3pt plus 2pt minus 1pt}%
  \setlength{\belowdisplayshortskip}{3pt plus 2pt minus 2pt}%
}

\newtheorem{theorem}{Theorem}[section]
\newtheorem{proposition}[theorem]{Proposition}

\renewcommand{\le}{\leqslant}

\renewcommand{\ge}{\geqslant}

\title{The maximum degree of Nosal graphs}

\author{Yongtao Li\thanks{Yau Mathematical Sciences Center, Tsinghua University, Beijing, China. E-mail address: \texttt{ytli0921@hnu.edu.cn}}
 }
 
\date{}

\begin{document}
\maketitle

\vspace{-9mm}
\begin{abstract}
A graph $G$ with $m$ edges is a Nosal graph if its spectral radius $\lambda(G)$ exceeds $\sqrt m$.
At the threshold $\lambda(G)=\sqrt m$, the star $K_{1,m}$ shows that one vertex can be incident with all $m$ edges.
In contrast, Li, Liu and Zhang [J. Combin. Theory Ser. B 179 (2026)] proved that every Nosal graph $G$ with sufficiently many edges satisfies $\Delta(G)\le\frac12m+m^{0.99}$, where the constant $\frac12$ is best possible.
In this paper, we prove that every Nosal graph $G$ with spectral radius $\lambda$ satisfies $\Delta(G)<m-\frac12(\lambda-1)(\lambda-2)$, and hence $\Delta(G)<\frac12m+\frac32\sqrt m-1$.
Both bounds are sharp up to an additive constant less than $\frac12$: for every $r\ge3$, the complete graph $K_r$ with $\frac12(r^2+r-2)$ pendant vertices attached to one vertex is a Nosal graph with $r^2-1$ edges whose maximum degree lies within $\frac12$ of both bounds.
Consequently, the largest maximum degree of a Nosal graph with $r^2-1$ edges is exactly $\frac12(r^2+3r-4)$.
The first bound cannot be improved by any positive additive constant: 
its gap on these graphs is less than $\frac{4}{r+5}$ and tends to zero as $r\to \infty$.
\end{abstract}

\textbf{Keywords:} Nosal graph; spectral radius; maximum degree; Perron vector.

\textbf{2020 Mathematics Subject Classification:} 05C50; 05C35.

\section{Introduction}\label{sec-1}

For a graph $G$, we write $n=|V(G)|$ and $m=e(G)$ for its numbers of vertices and edges, $d_G(v)$ for the degree of a vertex $v$, and $\Delta(G)$ for its maximum degree.
Let $A(G)$ be the adjacency matrix of $G$, and let $\lambda(G)$ be its largest eigenvalue, which is also its spectral radius.
Spectral extremal graph theory asks how large the spectral radius of a graph can be when the graph contains no copy of a given graph $F$.
In the edge-spectral setting, which goes back to Brualdi and Hoffman~\cite{BH1985}, the number of edges is prescribed.
The starting point of this setting is Nosal's theorem~\cite{Nosal1970}: every triangle-free graph $G$ satisfies $\lambda(G)\le\sqrt m$.
Several refinements and generalizations of Nosal's theorem are known, we refer the readers to \cite{BN2007,LinNingWu2021,Nikiforov2021,NingZhai2023,ZhaiLinShu2021}.
Nikiforov~\cite{Nikiforov2002} extended it to cliques by proving that every $K_{r+1}$-free graph $G$ satisfies $\lambda^2(G)\le (1-1/r)2m$; see, e.g., \cite{LiLiuZhang2025a,LiLiuZhang2025b} for further extensions.

\smallskip
Since $\lambda(G)\ge2m/n$, Nosal's theorem implies Mantel's theorem: every triangle-free graph 
on $n$ vertices contains at most $n^2/4$ edges.
The spectral condition $\lambda (G) > \sqrt{m}$, however, involves only the number of edges, 
so it also applies to sparse graphs with any edge density.
For example, let the \emph{book} $B_k = K_2 \vee \overline{K}_k$ consist of $k$ triangles that share one common edge.
For odd $m\ge3$, the book $B_{(m-1)/2}$ has $m$ edges but only $\frac{1}{2}(m+3)$ vertices, and its spectral radius $\frac{1}{2}(1+\sqrt{4m-3})$ exceeds $\sqrt m$.
The threshold $\sqrt m$ is sharp: every complete bipartite graph $K_{a,b}$ with $ab=m$ is triangle-free and satisfies $\lambda(K_{a,b})=\sqrt m$.
Following Li, Liu and Zhang~\cite{LiLiuZhang2026}, we call $G$ a {\color{blue}\bf \emph{Nosal graph}} if $\lambda(G)>\sqrt m$.
Since isolated vertices change neither $m$ nor $\lambda(G)$, we allow them throughout, and structural statements such as ``$G$ is complete bipartite'' are understood up to isolated vertices.

\subsection{Substructures in Nosal graphs}

In fact, a Nosal graph contains many triangles.
Let $t(G)$ be the number of triangles in $G$, and write $\lambda=\lambda(G)$.
Bollob\'as and Nikiforov~\cite{BN2007} proved that $t(G)\ge \frac{1}{3} \lambda(\lambda^2-m)$.
Li, Liu and Ning~\cite{LiLiuNing2026} recently improved this to $t(G)\ge \frac{1}{3} (\lambda+1)(\lambda^2-m)$, with equality exactly for complete graphs and complete bipartite graphs.
Chen, Li and Tang~\cite{ChenLiTang2026} proved the linear bound $t(G)\ge m(\lambda-\sqrt m)$.
Ning and Zhai~\cite{NingZhai2023} showed that $\lambda\ge\sqrt m$ forces at least $\lfloor \frac{1}{2}(\sqrt m-1)\rfloor$ triangles unless $G$ is complete bipartite.
Further above the threshold, the guaranteed number of triangles jumps from order $\sqrt m$ to order $m$.
For sufficiently large $m$, Zhang and Zhai~\cite{ZhangZhai2026} proved that $\lambda\ge \frac{1}{2}(1+\sqrt{4m-3})$ forces $t(G)\ge \frac{1}{2}(m-1)$. Chen, Li and Tang~\cite{ChenLiTang2026} proved that $\lambda\ge1+\sqrt{m-2}$ forces $t(G)\ge m-2$.
Both bounds are attained by split graphs: the book $B_{(m-1)/2}$ for odd $m$, and, for $m$ divisible by $3$, the graph obtained from a triangle by adding $(m-3)/3$ new vertices adjacent to all three of its vertices.

\smallskip
Supersaturation phenomena in Nosal graphs also occur for other subgraphs, and Table~\ref{tab-1} lists the current bounds.
Nikiforov~\cite{Nikiforov2009} proved that every Nosal graph with $m\ge 10$ edges contains a four-cycle $C_4$.
Ning and Zhai~\cite{NingZhai2025} showed that every Nosal graph with sufficiently many edges contains at least $\frac{1}{2000}m^2$ copies of $C_4$, which is the correct order of magnitude.
Li, Liu and Zhang~\cite{LiLiuZhang2026} improved the constant to $\frac{1}{8}-o(1)$, which is best possible.
Zhai, Lin and Shu~\cite{ZhaiLinShu2021} proved that every Nosal graph contains a complete bipartite graph $K_{2,t}$ with $t>\frac{1}{4} \sqrt m$.
Li, Liu and Zhang~\cite{LiLiuZhang2025a} improved this to $t\ge \frac{1}{2}\sqrt m-O(1)$, and random graphs show that the constant $\frac{1}{2}$ is best possible.

\smallskip
Large books in Nosal graphs have received particular attention in recent years.
Nikiforov~\cite{Nikiforov2021} proved that every Nosal graph contains a book $B_k$ with size $k> \frac{1}{12}m^{1/4}$, settling a conjecture of Zhai, Lin and Shu~\cite[Conjecture~5.2]{ZhaiLinShu2021}.
Using a probabilistic argument, Li, Liu and Zhang~\cite{LiLiuZhang2026} obtained the correct order, namely $k>\frac{1}{24} \sqrt m$.
They also constructed Nosal graphs showing that the constant cannot exceed $\frac13$, and they asked whether $k>\frac{1}{3} \sqrt m$ always holds.
The constant was improved to $\frac{1}{9}$ by Zhai, Li and Lou~\cite{ZhaiLiLou2026} and to $\frac{1}{4}$ by Chen, Li and Tang~\cite{ChenLiTang2026}.
Zhao, You, Zeng and Zhang~\cite{ZYZZ2026} answered this question affirmatively in the stronger form $k\ge\lambda/3$.
Li, Liu and Ning~\cite{LiLiuNing2026} then proved $k\ge\lambda-2m/(3\lambda)$, which is tight for infinitely many graphs.
The book $B_2$ is also called the \emph{kite} and denoted by $C_4^+$, since it arises from $C_4$ by adding a chord.
Chen, Li and Tang~\cite{ChenLiTang2026} showed that every Nosal graph contains at least $(\frac{1}{8} -o(1))m$ kites, and the constant $\frac{1}{8}$ is best possible.

\begin{table}[htbp]
\centering
\small
\renewcommand{\arraystretch}{1.25}
\begin{tabular}{llll}
\hline
Structure & Bound & Sharpness & References\\
\hline
$\# C_3$ & $\ge \frac{1}{3}(\lambda+1)(\lambda^2-m)$ & tight for $K_s$ and $K_{a,b}$ & \cite{BN2007,LiLiuNing2026}\\
$\# C_3$ & $\ge m(\lambda-\sqrt m\,)$ & tight for $K_{a,b}$ & \cite{ChenLiTang2026}\\
$\# C_4$ & $\ge ( \frac18-o(1))m^2$ & constant $1/8$ & \cite{NingZhai2025,LiLiuZhang2026}\\
$\# C_4^+$ & $\ge ( \frac18-o(1))m$ & constant $1/8$ & \cite{ChenLiTang2026}\\
Large $K_{2,t}$ & $t\ge \frac{1}{2} \sqrt m-O(1)$ & constant $1/2$ & \cite{ZhaiLinShu2021,LiLiuZhang2025a}\\
Large $B_k$ & $k> \frac{1}{3}\sqrt{m}$ & constant $1/3$ &
\cite{ZYZZ2026,LiLiuNing2026}\\
\hline
\textbf{maximum degree} & $\bm{\Delta(G)<\frac{m}{2} \!+\! \frac{3}{2} \sqrt{m} \! -\!1}$ & \textbf{coefficient $\bm{3/2}$} & \textbf{Theorem~\ref{thm-1.1}}\\
\hline
\end{tabular}
\caption{Extremal results for a Nosal graph $G$ with $m$ edges and spectral radius $\lambda=\lambda(G)$.
The upper rows give substructures that $G$ must contain; the two triangle bounds hold for every graph, and $o(1)\to0$ as $m\to\infty$.
The last row is the main result of this paper: no vertex of $G$ has degree $m/2+(3/2)\sqrt m-1$ or more (Theorem~\ref{thm-1.1}), and for $m=r^2-1$ the largest integer below this bound is attained (Proposition~\ref{prop-1.3}).}
\label{tab-1}
\end{table}

Near the threshold $\lambda=\sqrt m$, graphs without large books are close to complete bipartite graphs.
Li, Liu and Zhang~\cite[Theorem~4.1]{LiLiuZhang2025a} proved a stability version of Nosal's theorem: for every $\varepsilon\in(0,0.01)$, there is $\delta = \delta (\varepsilon)>0$ such that every triangle-free graph with $\lambda(G)\ge(1-\delta)\sqrt m$ becomes complete bipartite after adding and deleting at most $\varepsilon m$ edges.
Li, Liu and Ning~\cite[Theorem 1.9]{LiLiuNing2026} proved a stronger statement for all graphs: for every $\varepsilon>0$, there is $\delta = \delta (\varepsilon )>0$ such that every graph with $\lambda(G)\ge(1-\delta)\sqrt m$ either contains a book $B_k$ with size $k\ge( \frac 13-\varepsilon)\sqrt m$, or becomes complete bipartite after adding and deleting at most $\varepsilon m$ edges.

\smallskip
For all Nosal graphs, Li, Liu and Zhang~\cite[Theorem~1.8]{LiLiuZhang2026} proved a structure theorem: for every $\varepsilon>0$, there is a constant $N(\varepsilon)$ such that every Nosal graph $G$ with $m$ edges has a subgraph $G'$ with $\lambda(G')>(1-\varepsilon)\sqrt m-N(\varepsilon)$ that is bipartite or has maximum degree at most $\varepsilon m$.
The book $B_{(m-1)/2}$ and complete graphs show that both cases are needed.
The bipartite case does not exclude a vertex of degree close to $m$: the star $K_{1,m}$ is bipartite and has spectral radius $\sqrt m$.
This raises the question of how unevenly the edges of a Nosal graph can be distributed among its vertices.

\subsection{The maximum degree}

Vertex degrees also enter the counting problems above.
For example, let $\# C_4(G)$ be the number of subgraphs of $G$ isomorphic to $C_4$.
A standard count of closed walks of length four~\cite{LL2009} gives
\[
8\# C_4(G)=\operatorname{tr}A(G)^4-2\sum_{v\in V(G)}d_G(v)^2+2m.
\]
The degree terms count the closed walks of length four that do not trace a four-cycle, and a vertex of large degree lies on many such walks.
Therefore, turning spectral information into a count of four-cycles requires control of the high-degree vertices in a Nosal graph; see, for example,~\cite{NingZhai2025,LiLiuZhang2026}.

\smallskip 
As a key step in their count of four-cycles, Li, Liu and Zhang~\cite{LiLiuZhang2026} showed that every Nosal graph $G$ with sufficiently many edges satisfies
\begin{equation}\label{e-1}
\Delta(G)\le\frac m2+m^{0.99}.
\end{equation}
Using this bound, they showed~\cite[Corollary~4.5]{LiLiuZhang2026} that every Nosal graph $G$ with $m$ edges satisfies
\[
\sum_{v\in V(G)}d_G (v)^2 \le \Bigl(\frac{1}{2} + o(1) \Bigr)m^2.
\]
These two bounds play a key role in their proof of the sharp bound $\#C_4 (G)\ge (\frac{1}{8}- o(1))m^2$.
Li, Liu and Zhang~\cite{LiLiuZhang2025a} also used~\eqref{e-1} to find $K_{2,t}$ with $t\ge \frac{1}{2}\sqrt{m} - O(1)$.
The leading term $m/2$ in~\eqref{e-1} is best possible: for odd $m\ge3$, the book $B_{(m-1)/2}$ is a Nosal graph, and the two ends of its common edge have degree $\frac{1}{2}(m+1)$.
This bound reveals another jump at the threshold: the star $K_{1,m}$ has all $m$ edges at one vertex, whereas a Nosal graph has at most about half of its edges at any vertex.

\smallskip
Li, Liu and Zhang~\cite{LiLiuZhang2026} also noted that the error term $m^{0.99}$ in~\eqref{e-1} is far from optimal but suffices for their purpose.
It is natural to ask how large the correction to $m/2$ can be.
We show that it is less than $\frac32\sqrt m-1$, and this bound is sharp up to an additive error below $\frac12$ for infinitely many $m$.

\begin{theorem}\label{thm-1.1}
Every Nosal graph $G$ with $m$ edges satisfies
\[
\Delta(G)\ < \ \frac m2+\frac32\sqrt m-1.
\]
\end{theorem}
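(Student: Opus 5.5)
The plan is to prove the stronger bound announced in the abstract, $\Delta(G)<m-\tfrac12(\lambda-1)(\lambda-2)$ with $\lambda=\lambda(G)>\sqrt m$, and then deduce Theorem~\ref{thm-1.1}. For the deduction, note that $m-\tfrac12(\lambda-1)(\lambda-2)$ is decreasing in $\lambda$ once $\lambda\ge\tfrac32$. Nosal graphs with $m\ge 3$ have $\lambda>\sqrt m\ge\sqrt3>\tfrac32$, and the small cases $m\le 2$ are checked by hand. Substituting $\lambda=\sqrt m$ gives
\[
m-\tfrac12(\sqrt m-1)(\sqrt m-2)=\tfrac m2+\tfrac32\sqrt m-1 .
\]
Strict monotonicity then yields the strict inequality.

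To prove the stronger bound, fix a vertex $u$ with $d(u)=\Delta$, and let $x$ be a unit Perron vector. Split $V\setminus\{u\}$ into $N=N(u)$ and $R$, the non-neighbours. Let $H=G-u$, which has $m-\Delta$ edges.

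The idea is that $\lambda$ large forces many edges outside the star at $u$. Write the eigen-equation at $u$ as $\lambda x_u=\sum_{v\in N}x_v$. At every other vertex $v$ we have $\lambda x_v=x_u[v\in N]+(A(H)x)_v$. Hence
\[
\lambda^2=\lambda x^TAx=\lambda\bigl(2x_u\textstyle\sum_N x_v+x^TA(H)x\bigr)=2\lambda^2x_u^2+\lambda\, x^TA(H)x,
\]
so $\lambda\,x^TA(H)x=\lambda^2(1-2x_u^2)$. Next apply $x^TA(H)x\le\lambda(H)\,(1-x_u^2)$. For $\lambda(H)$, use the classical Stanley/Hong-type bound $\lambda(H)\le\tfrac12\bigl(-1+\sqrt{1+8e(H)}\bigr)$, which is equivalent to $e(H)\ge\tfrac12\lambda(H)(\lambda(H)+1)$. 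This bound is sharp for cliques, matching the extremal example in which $H$ is $K_r$ plus isolated vertices.

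The resulting inequality only involves $x_u$, so we need a second relation. Cauchy--Schwarz on $\lambda x_u=\sum_N x_v$ gives $\lambda^2x_u^2\le\Delta(1-x_u^2)$. A sharper version separates the neighbours lying in edges of $H$ from pendant ones. A pendant neighbour $v$ has $x_v=x_u/\lambda$, so the $p$ pendant neighbours contribute $p\,x_u/\lambda$ to the sum. The $q=\Delta-p$ non-pendant neighbours number at most $2e(H)=2(m-\Delta)$, and their contribution is bounded via Cauchy--Schwarz against the mass $1-x_u^2-p\,x_u^2/\lambda^2$.

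The plan is to combine the three relations:
\begin{enumerate}
\item $x^TA(H)x=\lambda(1-2x_u^2)$;
\item the clique bound on $\lambda(H)$ in terms of $m-\Delta$;
\item the refined Cauchy--Schwarz on $\lambda x_u$.
\end{enumerate}
Eliminating $x_u$ and $p$ should give $m-\Delta>\tfrac12(\lambda-1)(\lambda-2)$.

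A cleaner alternative avoids the Perron vector altogether. Since $A(G)$ is $A(H)$ bordered by a star, interlacing-type estimates give $\lambda^2\le\lambda(H)^2+\Delta+\text{(cross term)}$. Equivalently, $\lambda$ is a root of $\lambda=\lambda_{H}$-correction with $\lambda^2-\lambda\mu\le\Delta-$something, where $\mu\ge\lambda(H)$. One would then verify that $\lambda^2\le \mu\lambda+\Delta$ together with $m-\Delta\ge\tfrac12\mu(\mu+1)$ and $\lambda^2>m$ forces the inequality; for the extremal example $\mu=r-1$ this should be tight.

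The main obstacle is the elimination step, which must be exact rather than asymptotic. The quantity $(\lambda-1)(\lambda-2)/2$ must emerge as the minimum of $e(H)$ given the constraints, with the clique-plus-pendants configuration extremal. I would first try to prove $\lambda^2\le\lambda\,\lambda(H)+\Delta-\lambda(H)\cdot c$ for a suitable constant $c$. Then I would substitute $\lambda(H)\le\tfrac12(-1+\sqrt{1+8(m-\Delta)})$ and minimise over $\lambda(H)$. If this loses constants, I would fall back on the Perron-vector bookkeeping above, tracking $x_u$ exactly.
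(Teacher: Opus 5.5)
Your deduction of Theorem~\ref{thm-1.1} from the stronger bound $\Delta(G)<m-\tfrac12(\lambda-1)(\lambda-2)$ is exactly the paper's argument, via monotonicity of $f(s)=\tfrac12(s-1)(s-2)$ on $[\tfrac32,\infty)$, and it is fine. For $m\le2$ nothing needs checking, since a Nosal graph contains a triangle.

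The gap is that the stronger bound itself is never proved. The elimination step is only asserted, and the relations you list do not imply it.

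Take first your ``cleaner alternative''. Combining $\lambda^2\le\mu\lambda+\Delta$ with $\lambda^2>m=\Delta+e(H)$ yields only $e(H)<\mu\lambda$. Together with $\mu\le\tfrac12(-1+\sqrt{1+8e(H)})$, this is an \emph{upper} bound $e(H)<2\lambda^2-\lambda$, which is the wrong direction. For instance, $\Delta=100$, $e(H)=\mu=1$, $\lambda=10.1$ satisfies all your constraints, yet $e(H)<\tfrac12(\lambda-1)(\lambda-2)$. Nor is this inequality tight on the extremal graph. There $H=K_{r-1}$ plus isolated vertices, so $\mu=r-2$ (not $r-1$), and $\mu\lambda+\Delta-\lambda^2\approx r^2/2$.

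The Perron-vector version fails for a related reason. You bound $\lambda(H)$ by about $\sqrt{2e(H)}$ and the number $q$ of non-pendant neighbours by $2e(H)$ independently. These two bounds cannot be nearly sharp at once: if $q\approx2e(H)$, then $H$ is essentially a matching on those vertices and $\lambda(H)\approx1$.

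Concretely, take $\lambda$ large with
$e(H)=\lambda^2/10$, $\lambda(H)=0.44\lambda$, $q=2e(H)$, $p=0.69\lambda^2$ and $x_u^2=0.45$.
These values satisfy your relations 1--3 and $\lambda^2>m$. Relation 2 holds even after removing the pendant mass $p\,x_u^2/\lambda^2$. Yet $e(H)$ is far below $\tfrac12(\lambda-1)(\lambda-2)$.

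What is missing is an estimate that charges each edge of $G-v$ a bounded amount. In the paper's notation, $v$ is the fixed vertex, and $u,w$ carry the two largest coordinates off $v$. The paper normalizes $\bm{x}$ so that $x_u+x_w=1$, hence $x_i+x_j\le1$ for all $i,j\ne v$.
\begin{enumerate}
\item The two-step walk count at $v$ then gives $(\lambda^2-d(v))x_v\le e(G-v)$, so $x_v<1$.
\item Set $U=V\setminus\{u,v,w\}$, $h=1$ if $uw\in E(G)$ and $h=0$ otherwise, and $c_i=|N(i)\cap\{u,w\}|$. Summing the eigen-equations at $u$ and $w$ and iterating once gives $\lambda^2\le(2x_v+h)\lambda+(x_v+1)\sum_i c_i+2e(G[U])$.
\item This rearranges to $2e(G-v)-(\lambda-1)(\lambda-2)\ge(1-x_v)(2\lambda+\sum_i c_i)+(1-h)(\lambda-2)>0$.
\end{enumerate}
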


Theorem~\ref{thm-1.1} improves the error term in~\eqref{e-1} to the explicit term $\frac{3}{2} \sqrt m -1$. We note that Theorem~\ref{thm-1.1} holds for every Nosal graph, not only for those with sufficiently many edges.

\begin{theorem}\label{thm-1.2}
Every Nosal graph $G$ with $m$ edges satisfies
\[
\Delta(G)<m-\frac{1}{2} (\lambda(G) -1)(\lambda(G) -2).
\]
\end{theorem}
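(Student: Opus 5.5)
Fix a vertex $v$ with $d_G(v)=\Delta=:d$, put $H=G-v$ and $e:=e(H)=m-d$. The inequality to prove is equivalent to $e>\frac12(\lambda-1)(\lambda-2)$, where $\lambda=\lambda(G)$. I will argue by contradiction and assume $e\le\frac12(\lambda-1)(\lambda-2)$. The plan is to combine three ingredients: the Rayleigh quotient of a Perron vector split at $v$, a Cauchy--Schwarz bound at $v$, and Stanley's bound $\lambda(H)\le\frac12(\sqrt{8e+1}-1)$.

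\emph{Perron-vector inequalities.} Let $x$ be the unit Perron vector of $G$ and write $t=x_v^2$. Let $s=\sum_{u\in N(v)}x_u$, so that $\lambda x_v=s$.
\begin{itemize}
\item Splitting the Rayleigh quotient at $v$ gives
\[
\lambda=2x_vs+x_H^{\top}A(H)x_H\le 2\lambda t+\mu(1-t),\qquad \mu:=\lambda(H).
\]
\item Cauchy--Schwarz gives $s^2\le d\sum_{u\in N(v)}x_u^2\le d(1-t)$, that is,
\[
\lambda^2t\le d(1-t).
\]
\item Stanley's bound, together with the assumption on $e$, gives $\mu(\mu+1)\le 2e\le(\lambda-1)(\lambda-2)$. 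Hence $\mu\le\lambda-2$.
\end{itemize}

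\emph{Eliminating $t$.} Substituting $\mu\le\lambda-2$ into the Rayleigh inequality gives $\lambda(1-2t)\le(\lambda-2)(1-t)$, which rearranges to $t\ge 2/(\lambda+2)$. The Cauchy--Schwarz inequality then gives
\[
d\ge\frac{\lambda^2t}{1-t}\ge\frac{2\lambda^2}{\lambda}=2\lambda.
\]
This alone is not a contradiction, so the edge count must enter: $m=d+e<\lambda^2$ by the Nosal condition, so $d<\lambda^2-e$.

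I therefore expect to need a sharper bound on $x_H^{\top}A(H)x_H$ than $\mu(1-t)$. The loss is that Cauchy--Schwarz spends $1-t$ on $N(v)$, while the quadratic form spends the same mass on $H$. The first refinement I would try is to use the eigen-equation at neighbours of $v$: for $u\in N(v)$ one has $\lambda x_u=x_v+(A(H)x_H)_u$. This yields the identity
\[
\lambda^2 x_v=dx_v+\sum_{w\in V(H)}c(w)x_w,
\]
where $c(w)$ is the number of neighbours of $w$ in $N(v)$. I would then bound $\sum_w c(w)x_w$ by $2e$ times a suitable maximal entry, in the style of Hong's argument. Combined with $m<\lambda^2$, this should give
\[
\lambda^2<d+2e\cdot\frac{\max_{w\ne v}x_w}{x_v}.
\]
Together with $x_v$ being large, namely $t\ge 2/(\lambda+2)$, this should produce the contradiction.

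\emph{Main obstacle.} The hardest step is carrying out this optimization in $t$ with all the constraints at once, and keeping the inequality strict. The extremal graph, $K_r$ with pendant vertices at one vertex, nearly attains the bound, so every estimate above must be essentially tight for it. Cauchy--Schwarz is tight only when the $x_u$, $u\in N(v)$, are equal, and Stanley's bound is tight only for cliques. Strictness should come from the fact that these two equality cases cannot hold simultaneously with $\lambda>\sqrt m$.
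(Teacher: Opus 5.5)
Your proposal stops where the proof has to happen. The contradiction is announced (``this should produce the contradiction'') but never derived, and the inequalities you assemble cannot yield one. Under your hypothesis $2e\le(\lambda-1)(\lambda-2)$:
\begin{itemize}
\item the Rayleigh--Stanley step gives only $t\ge 2/(\lambda+2)$;
\item Cauchy--Schwarz gives only $d\ge 2\lambda$;
\item the walk identity gives $\lambda^2x_v=dx_v+\sum_w c(w)x_w\le dx_v+2eM$ with $M=\max_{w\ne v}x_w$. Combined with $\lambda^2-d>e$, this gives only $x_v<2M$. The edge count cancels, so this count constrains a ratio of coordinates and never bounds $e$ from below.
\end{itemize}
All of these hold simultaneously for any $\lambda\ge3$ with $\mu=\lambda-2$, $2e=(\lambda-1)(\lambda-2)$, $d$ slightly below $\lambda^2-e$, $x_v^2=2/(\lambda+2)$ and $M$ slightly above $x_v/2$. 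So no combination of them gives a contradiction.

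Your tightness heuristic also fails for one step. On $K_r^t$ the neighbours of $v$ have coordinates about $x_v/2$ on the clique and $x_v/\lambda$ on the pendants. Cauchy--Schwarz therefore loses a factor of order $\lambda$ there, which is why you get $d\ge2\lambda$ instead of the true $d\approx\lambda^2/2$.

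What is missing is a second count that bounds $\lambda^2$ by the edges of $G-v$ around the largest coordinates outside $v$, with the contribution of $v$ controlled by the first count. The paper proceeds as follows.
\begin{itemize}
\item Normalize so that the two largest coordinates outside $v$, at $u$ and $w$, satisfy $x_u+x_w=1$.
\item Do your first count edge by edge: an edge $ij$ of $G-v$ contributes at most $x_i+x_j\le1$. This is sharper than $2M$, and the sharpness is needed. It gives $(\lambda^2-d)x_v\le e$, hence $x_v<1$.
\item Add the eigenvector equations at $u$ and $w$ and multiply by $\lambda$. For $i\in U=V(G)\setminus\{u,v,w\}$ use $\lambda x_i\le x_v+1+\frac12 d_{G[U]}(i)$, which holds because $x_i\le\frac12$ on $U$.
\item Substitute $e=h+\sum_{i\in U}c_i+e(G[U])$, where $h=1$ if $uw\in E(G)$ and $h=0$ otherwise, and $c_i=|N_G(i)\cap\{u,w\}|$.
\end{itemize}
This yields
\[
2e-(\lambda-1)(\lambda-2)\ge(1-x_v)\Bigl(2\lambda+\sum_{i\in U}c_i\Bigr)+(1-h)(\lambda-2)>0 .
\]
Neither Stanley's bound nor Cauchy--Schwarz is used. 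Strictness comes from $x_v<1$, that is, from $\lambda^2>m$ through the first count, and not from incompatible equality cases.
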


This bound has a simple meaning.
Since $e(G-v)=m-d_G(v)$, Theorem~\ref{thm-1.2} says that 
\[
e(G-v)>\frac{1}{2} (\lambda(G)-1)(\lambda(G)-2) \quad\text{for every }v\in V(G).
\]
Thus, a larger spectral radius forces more edges to avoid each vertex.

\smallskip 
Denote $f(s):= \frac 12 (s-1)(s-2)$, so that Theorem~\ref{thm-1.2} reads $\Delta(G)<m-f(\lambda(G))$. 
Note that every Nosal graph $G$ has $m\ge3$ edges, and $\lambda(G)>\sqrt m\ge\sqrt3 $.
Since $f(s)$ is strictly increasing on $s\in [3/2,\infty)$, Theorem~\ref{thm-1.2} gives
$\Delta(G)<m-f(\sqrt m)=\frac m2+\frac32\sqrt m-1$, 
which is Theorem~\ref{thm-1.1}.

\smallskip 
Both bounds in Theorems \ref{thm-1.1} and \ref{thm-1.2} are sharp up to an additive constant less than $\frac12$.
For $r\ge3$ and $t\ge0$, let $K_r^t$ be the graph obtained from $K_r$ by attaching $t$ pendant vertices to a single vertex of $K_r$; see Figure~\ref{fig-1}.
Such graphs are known as {\it pineapple} graphs; see, e.g., Tait and Tobin \cite{TT2017}.

\begin{figure}[htbp]
\centering
\begin{tikzpicture}[x=1cm,y=1cm,
  vtx/.style={circle,fill=black,inner sep=1.6pt}]
  \foreach \i in {0,...,4}
    \coordinate (c\i) at ({180+72*\i}:1.15);
  \foreach \i in {0,...,3}
    \foreach \j in {\i,...,4}
      { \draw (c\i)--(c\j); }
  \foreach \i in {1,...,4} \node[vtx] at (c\i) {};
  \node[vtx,label={[label distance=1pt]below:{$v$}}] (v) at (c0) {};
  \foreach \y in {1.35,0.75,-0.75,-1.35}
    { \node[vtx] (p) at (-3.3,\y) {}; \draw (v)--(p); }
  \node at (-3.3,0.05) {$\vdots$};
  \draw[decorate,decoration={brace,amplitude=5pt}]
    (-3.6,-1.45)--(-3.6,1.45)
    node[midway,left=7pt,align=right]{$t$ pendant\\vertices};
  \node at (1.6,0) {$K_r$};
\end{tikzpicture}
\caption{The graph $K_r^t$, drawn for $r=5$: a complete graph $K_r$ with $t$ pendant vertices attached to the vertex $v$.
For $t=(r^2+r-2)/2$, it is a Nosal graph with $m=r^2-1$ edges, and the degree $(r^2+3r-4)/2$ of $v$ lies within $\frac12$ of the bound in Theorem~\ref{thm-1.1} (Proposition~\ref{prop-1.3}).}
\label{fig-1}
\end{figure}

\begin{proposition}\label{prop-1.3}
Let $r\ge3$ be an integer, let $t=(r^2+r-2)/2$, and let $G=K_r^t$.
Then $G$ has $m=r^2-1$ edges, $\Delta(G)=(r^2+3r-4)/2$, and $\sqrt m<\lambda(G)<r$.
Thus $G$ is a Nosal graph, and
\begin{align*}
0&<\frac m2+\frac32\sqrt m-1-\Delta(G)<\frac12,\\
0&<m-\frac{1}{2} (\lambda(G)-1)(\lambda(G)-2) -\Delta(G)<\frac{4}{r+5}.
\end{align*}
\end{proposition}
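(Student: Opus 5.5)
The plan is to begin with the routine counts and then do the spectral estimates, which carry the real content. The edge count is $\binom r2+t=\frac{r^2-r}{2}+\frac{r^2+r-2}{2}=r^2-1$. The vertex $v$ has degree $(r-1)+t=\frac{r^2+3r-4}{2}$. Every other vertex has degree at most $r-1$, which is smaller, so this is $\Delta(G)$. For the upper bound $\lambda(G)<r$, I will use the standard bound $\lambda\le\sqrt{2m-n+1}$ (Hong's bound) for graphs without isolated vertices. Here $n=r+t=\frac{r^2+3r-2}{2}$, so $2m-n+1=\frac{3r^2-3r-2}{2}$. This is less than $r^2$ exactly when $r^2-3r-2<0$, which fails for $r\ge4$, so Hong's bound is too weak. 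Instead I will use the equitable quotient directly.

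Let $a$ be the Perron entry at $v$, $b$ the common entry at the other $r-1$ clique vertices, and $c$ the common entry at the pendants. The eigen-equations are
\[
\lambda a=(r-1)b+tc,\qquad \lambda b=a+(r-2)b,\qquad \lambda c=a.
\]
Eliminating $b$ and $c$ shows that $\lambda$ is the largest root of
\[
p(x)=x\bigl(x-(r-2)\bigr)-(r-1)x-t\,\frac{x-(r-2)}{x}\cdot\frac{x}{x}\ \text{(cleared appropriately)},
\]
or more cleanly of the cubic $q(x)=x^2(x-r+2)-(r-1)x-t(x-r+2)$. The main steps are then to show $q(r)>0$, $q(\sqrt m)<0$, and that $q$ has no root beyond $r$.

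At $x=r$ we get $q(r)=2r^2-r(r-1)-2t=r^2+r-2t=2>0$. I also need $\lambda>r-2$; this holds because $\lambda\ge\lambda(K_r)=r-1$. On $x>r-2$, $q$ is increasing past its largest root, so there is a unique root there. Next I check $q(\sqrt m)<0$ with $s=\sqrt{r^2-1}$:
\[
q(s)=(s^2-t)(s-r+2)-(r-1)s=\tfrac{r^2-r}{2}(s-r+2)-(r-1)s=(r-1)\Bigl[\tfrac r2(s-r+2)-s\Bigr].
\]
The bracket equals $\frac{(r-2)s-r(r-2)}{2}=\frac{(r-2)(s-r)}{2}<0$ because $s<r$. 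Together these give $\sqrt m<\lambda<r$. The uniqueness of the root in $(r-2,\infty)$ is the step needing some care. I would argue it by noting that $q(x)/(x-r+2)=x^2-t-(r-1)x/(x-r+2)$ is increasing for $x>r-2$.

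For the two gap inequalities, the first gives
\[
\frac m2+\frac32 s-1-\Delta=\frac{r^2-1}{2}+\frac32 s-1-\frac{r^2+3r-4}{2}=\frac32(s-r),
\]
which is negative, not positive. So $\Delta<\frac m2+\frac32\sqrt m-1$ appears to fail here, and the statement as transcribed should be double-checked. I expect the intended comparison to come out as $\frac32(s-r)+\frac12>0$, which requires $r-s<\frac13$. Since $r-s=\frac1{r+s}\le\frac1{3+\sqrt8}<\frac13$, this gives a value in $(0,\frac12)$. I will re-derive $\Delta$ against the stated formula at this point.

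For the second gap, $m-f(\lambda)-\Delta=\frac{r^2-3r+2}{2}-f(\lambda)=f(r)-f(\lambda)$. This is positive since $f$ is increasing and $\lambda<r$, and by the mean value theorem it is at most $(r-\lambda)(r-\tfrac32)$. It therefore remains to bound $r-\lambda$ using $q(r)=2$: a lower bound $q'(x)\ge$ roughly $r^2+\dots$ on $[\lambda,r]$ gives $r-\lambda\le 2/\min q'$. Making this precise enough to get $\frac{4}{r+5}$ is the main technical obstacle. I would compute $q'(x)=3x^2-2(r-2)x-(r-1)-t$ and bound it below at $x=\sqrt m$, checking that the resulting constant matches.
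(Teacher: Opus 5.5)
Your edge and degree counts are correct, and so is your proof of $\sqrt m<\lambda<r$. After dividing by $\lambda-r+2$, your equation $q(\lambda)=0$ is exactly the paper's identity $\lambda^2-m=\frac{(r-1)(r-2)(r-\lambda)}{2(\lambda-r+2)}$. The paper reads both inequalities off the sign of the right side, so it needs neither the cubic nor the monotonicity of $q(x)/(x-r+2)$, though your justification of that monotonicity is fine.

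The first genuine problem is your first gap computation, which is an arithmetic slip, not a defect in the statement. Since $\frac m2-\Delta=\frac{3-3r}{2}$,
\[
\frac m2+\frac32 s-1-\Delta=\frac32(s-r)+\frac32-1=\frac32(s-r)+\frac12=\frac12-\frac{3}{2(r+s)}.
\]
This is exactly the expression you ``expected,'' and your observation $r+s>3$ places it in $(0,\frac12)$. As written, however, the proposal asserts that the stated inequality fails. That assertion is false and must be removed.

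The second gap inequality is not proved: you stop at calling it the main obstacle. Your mean-value plan does close it, with room to spare. Since $q''(x)=6x-2(r-2)>0$ for $x>(r-2)/3$, $q'$ is increasing on $[\sqrt m,r]$. Using $\sqrt{r^2-1}<r$ and $r-2>0$,
\[
q'(\sqrt m)>3(r^2-1)-2r(r-2)-(r-1)-t=\frac{r^2+5r-2}{2}.
\]
The mean value theorem gives $2=q(r)-q(\lambda)=q'(\xi)(r-\lambda)$ for some $\xi\in(\lambda,r)$, so $r-\lambda<\frac{4}{r^2+5r-2}$. Hence
\[
f(r)-f(\lambda)\le(r-\lambda)\left(r-\frac32\right)<\frac{2(2r-3)}{r^2+5r-2}<\frac{4}{r+5},
\]
where the last inequality is equivalent to $3r+11>0$.

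The paper takes a different route: it substitutes $d=r-\lambda$ into its identity to get $1=d\bigl(r+\lambda+\frac{(r-1)(r-2)}{2(2-d)}\bigr)>\frac{dr(r+5)}{4}$, using $\lambda>r-\frac12$ and $2-d<2$. It then concludes $f(r)-f(\lambda)=\frac12 d(r+\lambda-3)<dr<\frac{4}{r+5}$. Either route works, but one of them has to be carried out before the second inequality is established.
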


Since maximum degrees are integers, Theorem~\ref{thm-1.1} and Proposition~\ref{prop-1.3} determine the extremal value for every $m=r^2-1$: the maximum of $\Delta(G)$ over all Nosal graphs $G$ with $r^2-1$ edges is exactly $\frac 12 (r^2+3r-4)$, which is the largest integer less than $\frac m2+\frac32\sqrt m-1$.
In particular, the coefficient $\frac32$ of the term $\sqrt{m}$ in Theorem~\ref{thm-1.1} cannot be lowered, and the constant term $-1$ cannot be replaced by $-\frac32$.
The second inequality in Proposition~\ref{prop-1.3} shows that the stronger bound in Theorem~\ref{thm-1.2} is also tight up to an additive error less than $4/(r+5)\le\frac12$, which tends to $0$ as $r\to\infty$.
Hence, no positive constant can be subtracted from the right side of Theorem~\ref{thm-1.2}.

\section{Proof of Theorem \ref{thm-1.2}} \label{sec-2}

We fix a vertex $v$ and normalize the eigenvector so that the two largest coordinates outside $v$ sum to $1$.
Two counts based on the eigenvector equations show that more than $f(\lambda(G))$ edges avoid $v$.
 
For a vertex $i$ of a graph $G$, let $N_G(i)$ be the neighborhood of $i$, and let $G-i$ be the graph obtained by deleting $i$ and its incident edges.
For $S\subseteq V(G)$, let $G[S]$ be the subgraph of $G$ induced by $S$.
By the Perron--Frobenius theorem, $A(G)$ has a nonnegative eigenvector $\bm{x}$ for $\lambda(G)$.
If $G$ is connected, this eigenvector can be chosen positive and is unique up to a positive scalar factor; we call it a \emph{Perron vector} of $G$.
If $G$ is disconnected, some coordinates of this vector may be zero.

\begin{proof}[Proof of Theorem~\ref{thm-1.2}]
Let $\lambda=\lambda(G)$.
By Nosal's theorem, $G$ contains a triangle, so $G$ has at least three vertices and $\lambda\ge\lambda(K_3)=2$.
Fix a vertex $v$, and let $q :=e(G-v)=m-d_G(v)$. 
Recall that $f(s) = \frac 12 (s-1)(s-2)$. 
It suffices to show that $q>f(\lambda)$.
Let $\bm{x}$ be a nonnegative eigenvector of $A(G)$ for $\lambda$.
Some vertex other than $v$ has a positive coordinate, since otherwise $\lambda x_v=\sum_{i\in N_G(v)}x_i=0$ and $\bm{x}=\bm{0}$.
Choose distinct vertices $u,w\ne v$ with the two largest coordinates among the vertices other than $v$, and normalize $\bm{x}$ so that $x_u+x_w=1$.
Let $U:=V(G)\setminus\{u,v,w\}$.
By the choice of $u$ and $w$, we have $x_i+x_j\le1$ for all distinct $i,j\ne v$, and $x_i\le\frac12$ for all $i\in U$.

Applying the eigenvector equation twice at $v$, we obtain
\[
\lambda^2 x_v =\sum_{i\in N_G(v)}\lambda x_i=d_G(v) x_v+\sum_{i\in N_G(v)}\ \sum_{j\in N_G(i)\setminus\{v\}}x_j.
\]
An edge $ij$ of $G-v$ contributes $x_j$ to the double sum if $i\in N_G(v)$, and $x_i$ if $j\in N_G(v)$.
Hence it contributes at most $x_i+x_j\le1$, and $(\lambda^2-d_G(v)) x_v \le q$.
Since $\lambda^2>m=d_G(v)+q$, we have $\lambda^2-d_G(v)>q\ge0$, and therefore $x_v <1$.

Let $h=1$ if $uw\in E(G)$ and $h=0$ otherwise.
For $i\in U$, let $c_i :=|N_G(i)\cap\{u,w\}|$.
Counting the edges of $G-v$ by their ends in $\{u,w\}$, we get
\[
q=h+\sum_{i\in U}c_i+e(G[U]).
\]
Adding the eigenvector equations at $u$ and $w$ and using $x_u+x_w=1$, we obtain
\[
\lambda\le2x_v +h+\sum_{i\in U}c_ix_i.
\]
For $i\in U$, the neighbors of $i$ in $\{u,v,w\}$ contribute at most $ x_v +1$ to $\lambda x_i$, and each neighbor of $i$ in $U$ contributes at most $\frac12$.
Thus $\lambda x_i\le x_v +1+\frac12d_{G[U]}(i)$.
Multiplying the inequality obtained at $u$ and $w$ by $\lambda$, applying this bound, and using $c_i\le2$, we obtain
\[
\lambda^2\le(2x_v+h)\lambda+(x_v+1)\sum_{i\in U}c_i+\frac12\sum_{i\in U}c_i\,d_{G[U]}(i)
\le(2x_v+h)\lambda+(x_v +1)\sum_{i\in U}c_i+2e(G[U]).
\]
Substituting $e(G[U])=q-h-\sum_{i\in U}c_i$ and rearranging, we obtain
\[
2q-(\lambda-1)(\lambda-2)\ge(1-x_v)\left(2\lambda+\sum_{i\in U}c_i\right)+(1-h)(\lambda-2).
\]
Since $x_v <1$, $h\le1$ and $\lambda\ge2$, 
the right side is at least $2\lambda(1- x_v)>0$.
Hence $2q>(\lambda-1)(\lambda-2)$, that is, $q>f(\lambda)$.
As $v$ is an arbitrary vertex, we conclude that $\Delta(G)<m-f(\lambda)$.
\end{proof}

\section{Proof of Proposition~\ref{prop-1.3}}\label{sec-3}

The graph $K_r^t$ balances the edges of a clique against pendant edges at one of its vertices.
We first count its edges and degrees and then locate its spectral radius between $\sqrt m$ and $r$.

\begin{proof}[Proof of Proposition~\ref{prop-1.3}]
Let $r\ge3$ and $t=(r^2+r-2)/2$; this is an integer because $r(r+1)$ is even.
Let $v$ be the vertex of $G=K_r^t$ that carries the pendant vertices.
Then
$m=e(G)=\binom r2+t =r^2-1.$
Vertex $v$ has degree $r-1+t=(r^2+3r-4)/2$, the other clique vertices have degree $r-1$, and the pendant vertices have degree $1$.
Hence $\Delta(G)=(r^2+3r-4)/2$.

Let $\lambda=\lambda(G)$, and normalize a Perron vector $\bm{x}$ so that $x_v=1$.
By symmetry, $\bm{x}$ takes a common value $b$ on the other clique vertices and a common value $c$ on the pendant vertices.
The eigenvector equations at $v$, at another clique vertex, and at a pendant vertex are
\[
\lambda=(r-1)b+tc,\qquad
\lambda b=1+(r-2)b,\qquad
\lambda c=1.
\]
The second equation gives $(\lambda-r+2)b=1$, so $\lambda>r-2$ and $b=1/(\lambda-r+2)$.
The third gives $c=1/\lambda$.
Multiplying the first equation by $\lambda$ and subtracting $m=t+r(r-1)/2$, we obtain
\begin{equation}
\lambda^2-m=\frac{(r-1)\lambda}{\lambda-r+2}-\frac{r(r-1)}{2}
=\frac{(r-1)(r-2)(r-\lambda)}{2(\lambda-r+2)}.
\label{e-2}
\end{equation}
If $\lambda\ge r$, then the right side of~\eqref{e-2} is nonpositive, so $\lambda\le\sqrt m<r$, a contradiction.
Hence $\lambda<r$, the right side of~\eqref{e-2} is positive, and $\sqrt m<\lambda<r$.
Thus $G$ is a Nosal graph.

For the two inequalities, recall that $f(s)=(s-1)(s-2)/2$, and note that $f(r)-f(s)=(r-s)(r+s-3)/2$.
A direct computation gives $m-\Delta(G)=(r^2-3r+2)/2=f(r)$ and $m-f(\sqrt m)=\frac m2+\frac32\sqrt m-1$.
Hence the two quantities in Proposition~\ref{prop-1.3} are $f(r)-f(\sqrt m)$ and $f(r)-f(\lambda)$.
Since $r-\sqrt m=(r^2-m)/(r+\sqrt m)=1/(r+\sqrt m)$, we have
\[
f(r)-f(\sqrt m)=\frac{(r-\sqrt m)(r+\sqrt m-3)}{2}=\frac12-\frac{3}{2(r+\sqrt m)},
\]
which lies in $(0,\frac12)$ because $r+\sqrt m>3$.

For the second quantity, let $d=r-\lambda$, so that $0<d< r- \sqrt{m}= 
1/ (r+ \sqrt{m}) < \frac{1}{2}$.
Since $m=r^2-1$, we have $\lambda^2-m=1-d(r+\lambda)$, and~\eqref{e-2} becomes
\[
1=d\Bigl(r+\lambda+\frac{(r-1)(r-2)}{2(2-d)}\Bigr).
\]
Using $\lambda>r-\frac12$ and $2-d<2$, we obtain
\[
1>d \Bigl(2r-\frac12+\frac{(r-1)(r-2)}{4}\Bigr)=\frac{d r(r+5)}{4}.
\]
Since $\lambda<r$, we conclude that
$0<f(r)-f(\lambda)=\frac{1}{2}d(r+\lambda-3) <d r<\frac{4}{r+5}$.
\end{proof}

\section*{Acknowledgments}
This work was supported by the Postdoctoral Science Foundation of China (No.~2026M793384), the Postdoctoral Fellowship Program of China Postdoctoral Science Foundation (No.~GZC20261656) and the National Natural Science Foundation of China (No.~12601675).
The author would like to thank Hongzhang Chen and Yongjiang Wu for their valuable assistance and suggestions.
During the preparation of this paper, the author used AI tools to brainstorm proof strategies and to provide simplifications of several proof arguments.
All mathematical arguments were reviewed and verified by the author, who takes full responsibility for the content of this paper.

\end{document}